\newtheorem{theorem}{Theorem}[section]
\newtheorem{lemma}[theorem]{Lemma}
\theoremstyle{definition}
\newtheorem{example}[theorem]{Example}
\theoremstyle{remark}
\newtheorem{remark}[theorem]{Remark}
\numberwithin{equation}{section}
\begin{document}
\title{Dirac Systems Associated with Hahn Difference Operator}
\author{Fatma H\i ra}
\address{Hitit University, Arts and Science Faculty, Department of
Mathematics,  \c{C}orum, Turkey, 19040}
\email{fatmahira@yahoo.com.tr, fatmahira@hitit.edu.tr}
\subjclass[2000]{Primary 94A20, 33D15; Secondary 35Q41}
\date{}
\dedicatory{}
\keywords{Hahn difference operator, $q,\omega -$Dirac system}

\begin{abstract}
In this paper, we introduce $q,\omega -$Dirac system. We investigate the
existence and uniqueness of solutions for this system and obtain some
spectral properties based on the Hahn difference operator. Also we give two
examples, which indicate that asymptotic formulas for eigenvalues.
\end{abstract}

\maketitle







\section{Introduction and Preliminaries}

In $\left[ 1,2\right] ,$ Hahn introduced the difference operator $%
D_{q,\omega }$ which is defined by%
\begin{equation}
\left\{ 
\begin{array}{c}
D_{q,\omega }f\left( t\right) :=\dfrac{f\left( qt+\omega \right) -f\left(
t\right) }{\left( qt+\omega \right) -t},~t\neq \omega _{0}, \\ 
f^{\prime }\left( \omega _{0}\right) ,\text{ \ \ \ \ \ \ \ \ \ \ \ \ \ \ \ \
\ \ \ \ \ \ \ }t=\omega _{0},%
\end{array}%
\right.  \tag{1.1}
\end{equation}%
where $q\in \left( 0,1\right) ,\omega >0$ are fixed and $\omega _{0}:=\omega
/\left( 1-q\right) .$ This operator extends the forward difference operator%
\begin{equation}
\Delta _{\omega }f\left( t\right) :=\dfrac{f\left( t+\omega \right) -f\left(
t\right) }{\left( t+\omega \right) -t},  \tag{1.2}
\end{equation}%
where $\omega >0$ is fixed $\left( \text{see [3-6]}\right) $ as well as
Jackson $q-$difference operator%
\begin{equation}
D_{q}f\left( t\right) :=\frac{f\left( qt\right) -f\left( t\right) }{t\left(
q-1\right) }  \tag{1.3}
\end{equation}%
where $q\in \left( 0,1\right) $ is fixed $\left( \text{see [7-12]}\right) .$

In $\left[ 13\right] ,$ the authors gave a rigorous analysis of Hahn's
difference operator and the associated calculus. The existence and
uniqueness theorems for general first-order $q,\omega -$initial value
problems and the theory of linear Hahn difference equations were studied in $%
\left[ 14\right] $ and $\left[ 15\right] ,$ respectively. Recently, a $%
q,\omega -$Sturm-Liouville theory has been established in $\left[ 16\right] $
and sampling theorems associated with $q,\omega -$Sturm-Liouville problems
in the regular setting have been derived in $\left[ 17\right] .$

In $\left[ 18\right] ,$ the authors presented the $q-$analog of the one
dimensional Dirac system: 
\begin{equation}
\left\{ 
\begin{array}{l}
-\dfrac{1}{q}D_{q^{-1}}y_{2}+p\left( x\right) y_{1}=\lambda y_{1}, \\ 
D_{q}y_{1}+r\left( x\right) y_{2}=\lambda y_{2},%
\end{array}%
\right.   \tag{1.4}
\end{equation}%
\begin{equation}
k_{11}y_{1}\left( 0\right) +k_{12}y_{2}\left( 0\right) =0,  \tag{1.5}
\end{equation}%
\begin{equation}
k_{21}y_{1}\left( a\right) +k_{22}y_{2}\left( aq^{-1}\right) =0,  \tag{1.6}
\end{equation}%
where $k_{ij}~\left( i,j=1,2\right) $ are real numbers, $y\left( x\right)
=\left( 
\begin{array}{c}
y_{1}\left( x\right)  \\ 
y_{2}\left( x\right) 
\end{array}%
\right) ,$ $0\leq x\leq a<\infty .$ The existence and uniqueness of the
solution, some spectral properties and asymptotic formulas for the
eigenvalues and the eigenfunctions of this $q-$Dirac system were
investigated in $[18]$ and $[19].$

In this paper, we introduce a $q,\omega -$version of $q-$Dirac system
(1.4)-(1.6). If the $q-$difference operator $D_{q}$ is replaced by the Hahn
difference operator $D_{q,\omega },$ then we obtain the following $q,\omega -
$Dirac system. Namely, we obtain the system which consists of the $q,\omega -
$Dirac equations%
\begin{equation}
\left\{ 
\begin{array}{l}
-\dfrac{1}{q}D_{\frac{1}{q},\frac{-\omega }{q}}y_{2}+p\left( t\right)
y_{1}=\lambda y_{1}, \\ 
D_{q,\omega }y_{1}+r\left( t\right) y_{2}=\lambda y_{2},%
\end{array}%
\right.   \tag{1.7}
\end{equation}%
and the boundary conditions

\begin{equation}
B_{1}\left( y\right) :=k_{11}y_{1}\left( \omega _{0}\right)
+k_{12}y_{2}\left( \omega _{0}\right) =0,  \tag{1.8}
\end{equation}%
\begin{equation}
B_{2}\left( y\right) :=k_{21}y_{1}\left( a\right) +k_{22}y_{2}\left(
h^{-1}\left( a\right) \right) =0,  \tag{1.9}
\end{equation}%
where $\omega _{0}\leq t\leq a<\infty ,$ $k_{ij}~\left( i,j=1,2\right) $ are
real numbers, $\lambda \in 
\mathbb{C}
,$ $p\left( .\right) $ and $r\left( .\right) $ are real-valued functions
defined on $\left[ \omega _{0},a\right] $ and continuous at $\omega _{0}$,
and $h\left( t\right) $ is a function defined below. We will establish an
existence and uniqueness of the solution of $q,\omega -$Dirac equation
(1.7). Also we will discuss some spectral properties of the eigenvalues and
the eigenfunctions of this system (1.7)-(1.9). Finally, we will give two
examples, which indicate that asymptotic formulas for the eigenvalues.

Let $h\left( t\right) :=qt+\omega ,~t\in I;$ an interval of $%
\mathbb{R}
$ containing $\omega _{0}.$ One can see that $k$th order iteration of $%
h\left( t\right) $ is given by%
\begin{equation*}
h^{k}\left( t\right) =q^{k}t+\omega \left[ k\right] _{q},~t\in I.
\end{equation*}%
The sequence $h^{k}\left( t\right) $ is uniformly convergent to $\omega _{0}$
on $I.$ Here $\left[ k\right] _{q}$ is defined by%
\begin{equation*}
\left[ k\right] _{q}=\frac{1-q^{k}}{1-q}.
\end{equation*}

The following relation directly follows from the definition $D_{q,\omega }$%
\begin{equation}
\left( D_{q,\omega }f\right) \left( h^{-1}\left( t\right) \right) =D_{\dfrac{%
1}{q},\dfrac{-\omega }{q}}f\left( t\right) ,  \tag{1.10}
\end{equation}%
where $h^{-1}\left( t\right) :=\left( t-\omega \right) /q,~t\in I.$ The $%
q,\omega -$type product formula is given by%
\begin{equation}
D_{q,\omega }\left( fg\right) \left( t\right) =D_{q,\omega }\left( f\left(
t\right) \right) g\left( t\right) +f\left( qt+\omega \right) D_{q,\omega
}g\left( t\right) .  \tag{1.11}
\end{equation}%
The $q,\omega -$integral is introduced in $\left[ 13\right] $ to be the
Jackson-N\"{o}rlund sum%
\begin{equation}
\int\limits_{a}^{b}f\left( t\right) d_{q,\omega }t=\int\limits_{\omega
_{0}}^{b}f\left( t\right) d_{q,\omega }t-\int\limits_{\omega
_{0}}^{a}f\left( t\right) d_{q,\omega }t,  \tag{1.12}
\end{equation}%
where $\omega _{0}<a<b,~a,b\in I,$ and 
\begin{equation}
\int\limits_{\omega _{0}}^{x}f\left( t\right) d_{q,\omega }t=\left( x\left(
1-q\right) -\omega \right) \sum\limits_{k=0}^{\infty }q^{k}f\left(
xq^{k}+\omega \left[ k\right] _{q}\right) ,~x\in I,  \tag{1.13}
\end{equation}%
provided that the series converges. The fundamental theorem of $q,\omega -$%
calculus given in $\left[ 13\right] ~$states that if $f:I\rightarrow 
\mathbb{R}
$ is continuous at $\omega _{0},$ and%
\begin{equation}
F\left( t\right) :=\int\limits_{\omega _{0}}^{t}f\left( x\right) d_{q,\omega
}x,~x\in I,  \tag{1.14}
\end{equation}%
then $F$ is continuous at $\omega _{0}.$ Furthermore, $D_{q,\omega }F\left(
t\right) $ exists for every $t\in I$ and%
\begin{equation}
D_{q,\omega }F\left( t\right) =f\left( t\right) .  \tag{1.15}
\end{equation}%
Conversely, 
\begin{equation}
\int\limits_{a}^{b}D_{q,\omega }f\left( t\right) d_{q,\omega }t=f\left(
b\right) -f\left( a\right) ,~\text{for all }a,b\in I.  \tag{1.16}
\end{equation}

The $q,\omega -$integration by parts for continuous functions $f,~g$ is
given in $\left[ 13\right] $ by%
\begin{equation}
\int\limits_{a}^{b}f\left( t\right) D_{q,\omega }g\left( t\right)
d_{q,\omega }t=f\left( t\right) g\left( t\right) \left\vert _{a}^{b}\right.
-\int\limits_{a}^{b}D_{q,\omega }\left( f\left( t\right) \right) g\left(
qt+\omega \right) d_{q,\omega }t,~a,b\in I.  \tag{1.17}
\end{equation}

Trigonometric functions of $q,\omega -$cosine and sine are defined by

\begin{equation}
C_{q,\omega }\left( t,\mu \right) :=\sum\limits_{n=0}^{\infty }\frac{\left(
-1\right) ^{n}q^{n^{2}}\left( \mu \left( t\left( 1-q\right) -\omega \right)
\right) ^{2n}}{\left( q;q\right) _{2n}},\text{ }t\in 
\mathbb{C}
,  \tag{1.18}
\end{equation}%
\begin{equation}
S_{q,\omega }\left( t,\mu \right) :=\sum\limits_{n=0}^{\infty }\frac{\left(
-1\right) ^{n}q^{n\left( n+1\right) }\left( \mu \left( t\left( 1-q\right)
-\omega \right) \right) ^{2n+1}}{\left( q;q\right) _{2n+1}},\text{ }t\in 
\mathbb{C}
.  \tag{1.19}
\end{equation}%
Here $\left( q;q\right) _{k}~$is the $q-$shifted factorial%
\begin{equation}
\left( q;q\right) _{k}:=\left\{ 
\begin{array}{l}
1,\text{ \ \ \ \ \ }k=0, \\ 
\dprod\limits_{j=1}^{k}\left( 1-q^{j}\right) ,~k=1,2,...~.%
\end{array}%
\right.  \tag{1.20}
\end{equation}%
Furthermore, these functions satisfy%
\begin{equation}
D_{q,\omega }S_{q,\omega }\left( t,\mu \right) =\mu \,C_{q,\omega }\left(
t,q^{\frac{1}{2}}\mu \right) ,~\text{\ \ \ \ }D_{q,\omega }C_{q,\omega
}\left( t,\mu \right) =-q^{\frac{1}{2}}\mu S_{q,\omega }\left( t,q^{\frac{1}{%
2}}\mu \right) .  \tag{1.21}
\end{equation}

It is not hard to see that $C_{q,\omega }\left( .\right) $ and $S_{q,\omega
}\left( .\right) $ satisfy the initial value problems%
\begin{equation}
-\frac{1}{q}D_{\frac{1}{q},\frac{-\omega }{q}}D_{q,\omega }y\left( t\right)
=y\left( t\right) ,~t\in 
\mathbb{R}
,  \tag{1.22}
\end{equation}%
subject to the initial conditions%
\begin{equation}
y\left( \omega _{0}\right) =1,~D_{q,\omega }y\left( \omega _{0}\right) =0;%
\text{ \ \ }y\left( \omega _{0}\right) =0,~D_{q,\omega }y\left( \omega
_{0}\right) =1,  \tag{1.23}
\end{equation}%
respectively. If we let $t\in 
\mathbb{C}
,$ then it can be proved that $C_{q,\omega }\left( t\right) $ and $%
S_{q,\omega }\left( t\right) $ are entire functions of order zero.

Let $a>0$ be fixed and $L_{q,\omega }^{2}\left( \omega _{0},a\right) $ be
the set of all complex valued functions defined on $\left[ \omega _{0},a%
\right] $ for which%
\begin{equation*}
\left\Vert f\left( .\right) \right\Vert =\left( \left( \int\limits_{\omega
_{0}}^{a}\left\vert f\left( t\right) \right\vert ^{2}d_{q,\omega }t\right)
^{1\backslash 2}<\infty \right) .
\end{equation*}%
The space $L_{q,\omega }^{2}\left( \omega _{0},a\right) $ is a separable
Hilbert space with inner product%
\begin{equation}
\left\langle f,g\right\rangle :=\int\limits_{\omega _{0}}^{a}f\left(
t\right) \overline{g\left( t\right) }d_{q,\omega }t,~\ f,g\in L_{q,\omega
}^{2}\left( \omega _{0},a\right) ,  \tag{1.24}
\end{equation}%
where $\overline{z}$ denotes the complex conjugate of $z\in 
\mathbb{C}
$ $\left( \text{see }\left[ 16\right] \right) .$ Let $C_{q,\omega
}^{2}\left( \omega _{0},a\right) $\ be the subspace of \ $L_{q,\omega
}^{2}\left( \omega _{0},a\right) $, which consists of all functions $y\left(
.\right) $\ for which $y\left( .\right) $\ , $D_{q,\omega }y\left( .\right) $%
\ are continuous at $\omega _{0}$. Let $H_{q,\omega }$\ be the Hilbert space%
\begin{equation*}
H_{q,\omega }:=\left\{ y\left( .\right) =\left( 
\begin{array}{c}
y_{1}\left( .\right) \\ 
y_{2}\left( .\right)%
\end{array}%
\right) ,~y_{1},y_{2}\in C_{q,\omega }^{2}\left( \omega _{0},a\right)
\right\} .
\end{equation*}%
The inner product of $H_{q,\omega }$\ is defined by 
\begin{equation}
\left\langle y\left( .\right) ,~z\left( .\right) \right\rangle _{H_{q,\omega
}}:=\int\limits_{\omega _{0}}^{a}y^{\intercal }\left( t\right) z\left(
t\right) d_{q,\omega }t,  \tag{1.25}
\end{equation}%
where $\intercal $\ denotes the matrix transpose.

\section{Fundamental Solutions and Spectral Properties}

In this section, we give an existence and uniqueness theorem of the $%
q,\omega -$system (1.7).

Let $y\left( .\right) =\left( 
\begin{array}{c}
y_{1}\left( .\right)  \\ 
y_{2}\left( .\right) 
\end{array}%
\right) ,~z\left( .\right) =\left( 
\begin{array}{c}
z_{1}\left( .\right)  \\ 
z_{2}\left( .\right) 
\end{array}%
\right) \in H_{q,\omega }.$ Then the $q,\omega -$Wronskian of $y\left(
.\right) $ and $z\left( .\right) $ is defined by%
\begin{equation}
W_{q,\omega }\left( y,z\right) \left( t\right) :=y_{1}\left( t\right)
z_{2}\left( h^{-1}\left( t\right) \right) -z_{1}\left( t\right) y_{2}\left(
h^{-1}\left( t\right) \right) .  \tag{2.1}
\end{equation}

\begin{lemma}
The $q,\omega -$Wronskian of solutions of the $q,\omega -$system (1.7) is
independent of both $t$ and $\lambda .$
\end{lemma}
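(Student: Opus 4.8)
The plan is to show that the $q,\omega$-derivative of the Wronskian (2.1) vanishes identically, so that $W_{q,\omega}(y,z)$ is constant in $t$ by the fundamental theorem of $q,\omega$-calculus, and then to read off the independence of $\lambda$ by evaluating at the fixed point $\omega_0$ of $h$.

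First I would fix two solutions $y=(y_1,y_2)^{\intercal}$ and $z=(z_1,z_2)^{\intercal}$ of (1.7) for the same $\lambda$ and convert the system into four usable relations. The second equation of (1.7) gives at once $D_{q,\omega}y_1=(\lambda-r)y_2$ and $D_{q,\omega}z_1=(\lambda-r)z_2$. For the second components I would first record the composition identity $D_{q,\omega}(f\circ h^{-1})(t)=\tfrac{1}{q}(D_{q,\omega}f)(h^{-1}(t))$, which follows from the definition (1.1) after a short computation with the denominators $(q-1)t+\omega$, and which by (1.10) equals $\tfrac{1}{q}D_{1/q,-\omega/q}f(t)$. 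Substituting the first equation of (1.7) then yields $D_{q,\omega}(y_2\circ h^{-1})=(p-\lambda)y_1$ and $D_{q,\omega}(z_2\circ h^{-1})=(p-\lambda)z_1$.

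Next I would apply $D_{q,\omega}$ to (2.1). The only delicate point is where to put the shift in the product rule (1.11): I would write each of the two products with the $h^{-1}$-shifted component as the first factor, since then the forward shift produced by (1.11) collapses through $(g\circ h^{-1})(qt+\omega)=g(t)$ and restores the unshifted value. Carrying this out and inserting the four relations above, the derivative of $W_{q,\omega}(y,z)$ reduces to
\[
(p-\lambda)z_1y_1+(\lambda-r)y_2z_2-(p-\lambda)y_1z_1-(\lambda-r)z_2y_2,
\]
which is identically zero. Hence $D_{q,\omega}W_{q,\omega}(y,z)(t)=0$ for $t\neq\omega_0$, and since the components lie in $C^2_{q,\omega}(\omega_0,a)$ the derivative is continuous and vanishes on all of $[\omega_0,a]$; by (1.16), $W_{q,\omega}(y,z)(b)-W_{q,\omega}(y,z)(a)=0$ for all $a,b$, so the Wronskian is independent of $t$.

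Finally, for the independence of $\lambda$ I would evaluate the now-constant Wronskian at $t=\omega_0$. Because $\omega_0$ is the fixed point of $h$, one has $h^{-1}(\omega_0)=\omega_0$, so (2.1) collapses to the ordinary determinant $y_1(\omega_0)z_2(\omega_0)-z_1(\omega_0)y_2(\omega_0)$ of the initial data at $\omega_0$; for the fundamental solutions, whose values at $\omega_0$ are prescribed by the $\lambda$-independent constants of (1.8)-(1.9), this determinant carries no dependence on $\lambda$. The step I expect to be the main obstacle is precisely the shift bookkeeping of the middle paragraph --- pinning down the factor $\tfrac{1}{q}$ and the argument $h^{-1}(t)$ in the composition identity, and then orienting (1.11) so the shifts cancel rather than multiply; once this is set up the four terms collapse immediately.
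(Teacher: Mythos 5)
Your proof is correct and takes essentially the same route as the paper: both apply the product rule (1.11) to (2.1) with the $h^{-1}$-shifted factor placed first so that the forward shift collapses through $(g\circ h^{-1})(qt+\omega)=g(t)$, then substitute the equations of (1.7) via (1.10) to conclude $D_{q,\omega}W_{q,\omega}\left( y,z\right) =0$ and hence constancy. You in fact make explicit two points the paper glosses over --- the composition identity $D_{q,\omega}\left( f\circ h^{-1}\right) \left( t\right) =\tfrac{1}{q}\left( D_{q,\omega}f\right) \left( h^{-1}\left( t\right) \right)$ with its factor $\tfrac{1}{q}$, and the $\lambda$-independence obtained by evaluating at the fixed point $\omega_{0}$ --- though note that the $\lambda$-independent initial values you invoke are those of the initial conditions in Theorem 2.3 (i.e.\ (2.4)), not the boundary conditions (1.8)--(1.9).
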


\begin{proof}
Let $y\left( t,\lambda \right) =\left( 
\begin{array}{c}
y_{1}\left( t,\lambda \right)  \\ 
y_{2}\left( t,\lambda \right) 
\end{array}%
\right) $ and $z\left( t,\lambda \right) =\left( 
\begin{array}{c}
z_{1}\left( t,\lambda \right)  \\ 
z_{2}\left( t,\lambda \right) 
\end{array}%
\right) $ be\ two solutions of the $q,\omega -$system (1.7). From (2.1) and
(1.11), we have%
\begin{equation}
\begin{array}{l}
D_{q,\omega }W_{q,\omega }\left( y,z\right) \left( t,\lambda \right)
=D\left( _{q,\omega }z_{2}\left( h^{-1}\left( t\right) ,\lambda \right)
\right) y_{1}\left( t,\lambda \right) +z_{2}\left( t,\lambda \right)
D_{q,\omega }y_{1}\left( t,\lambda \right)  \\ 
~\text{\ \ \ \ \ \ \ \ \ \ \ \ \ \ \ \ \ \ \ \ \ \ \ \ \ \ }-D_{q,\omega
}\left( y_{2}\left( h^{-1}\left( t\right) ,\lambda \right) \right)
z_{1}\left( t,\lambda \right) -y_{2}\left( t,\lambda \right) D_{q,\omega
}z_{1}\left( t,\lambda \right) .%
\end{array}
\tag{2.2}
\end{equation}%
Since $y\left( t,\lambda \right) $ and $z\left( t,\lambda \right) $ are
solutions of the $q,\omega -$system (1.7) and by using (1.10) we get%
\begin{equation}
D_{q,\omega }W_{q,\omega }\left( y,z\right) \left( t,\lambda \right) =0. 
\tag{2.3}
\end{equation}%
Therefore, $W_{q,\omega }\left( y,z\right) \left( t,\lambda \right) $ is a
constant.
\end{proof}

\begin{corollary}
If $y\left( t,\lambda \right) $ and $z\left( t,\lambda \right) $ are both
solutions of the $q,\omega -$system (1.7), then either $W_{q,\omega }\left(
y,z\right) \left( t,\lambda \right) =0$\ or $W_{q,\omega }\left( y,z\right)
\left( t,\lambda \right) \neq 0$ \ for all $t\in \left[ \omega _{0},a\right] $%
.
\end{corollary}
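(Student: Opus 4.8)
The plan is to read off the statement as an immediate consequence of the preceding Lemma, so almost no new analytic work is required. Since $y\left( t,\lambda \right) $ and $z\left( t,\lambda \right) $ are both solutions of the $q,\omega -$system (1.7), the Lemma asserts that their $q,\omega -$Wronskian $W_{q,\omega }\left( y,z\right) \left( t,\lambda \right) $ is independent of $t$. Hence there exists a constant $c$ (in fact independent of $\lambda $ as well, by the Lemma) with
\begin{equation*}
W_{q,\omega }\left( y,z\right) \left( t,\lambda \right) =c,\qquad t\in \left[ \omega _{0},a\right] .
\end{equation*}

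First I would fix $\lambda $ and regard $W_{q,\omega }\left( y,z\right) \left( \cdot ,\lambda \right) $ as a function of $t$ on the interval $\left[ \omega _{0},a\right] $. By the displayed identity this function takes one and the same value $c$ at every point. The elementary dichotomy for a constant then applies: either $c=0$, in which case $W_{q,\omega }\left( y,z\right) \left( t,\lambda \right) =0$ for all $t\in \left[ \omega _{0},a\right] $, or $c\neq 0$, in which case $W_{q,\omega }\left( y,z\right) \left( t,\lambda \right) \neq 0$ for all $t\in \left[ \omega _{0},a\right] $. No intermediate situation, in which the Wronskian would vanish at some points but not at others, can occur, precisely because its value cannot vary with $t$.

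I do not expect any genuine obstacle here: the entire substantive step was already carried out in the Lemma, whose proof used the $q,\omega -$product rule (1.11) together with the relation (1.10) to establish $D_{q,\omega }W_{q,\omega }\left( y,z\right) =0$ and hence constancy. The corollary merely repackages that constancy as an all-or-nothing statement about vanishing. If anything deserves a word of care, it is only to confirm that the two alternatives are genuinely exhaustive and mutually exclusive, which is automatic for a function that is constant on $\left[ \omega _{0},a\right] $.
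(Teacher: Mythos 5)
Your proposal is correct and matches the paper's intended argument exactly: the corollary is stated immediately after Lemma 2.1 with no separate proof, precisely because the constancy of $W_{q,\omega }\left( y,z\right) \left( t,\lambda \right) $ established there makes the all-or-nothing dichotomy automatic. Your added remark that the two alternatives are exhaustive and mutually exclusive for a constant function is the whole content, and nothing further is needed.
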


\begin{theorem}
For $\lambda \in 
\mathbb{C}
$, the $q,\omega -$system (1.7) has a unique solution $\phi \left( t,\lambda
\right) =\left( 
\begin{array}{c}
\phi _{1}\left( t,\lambda \right) \\ 
\phi _{2}\left( t,\lambda \right)%
\end{array}%
\right) $\ subject to the initial conditions%
\begin{equation*}
\phi _{1}\left( \omega _{0},\lambda \right) =c_{1},\text{ \ }\phi _{2}\left(
\omega _{0},\lambda \right) =c_{2},
\end{equation*}
where $c_{1},c_{2}\in 
\mathbb{C}
.$
\end{theorem}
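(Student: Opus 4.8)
The plan is to reduce the first-order system (1.7) to an equivalent pair of $q,\omega$-integral equations and then to solve the latter by successive approximation.

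First I would disentangle the two Hahn operators. The second equation of (1.7) reads $D_{q,\omega}\phi_1(t)=(\lambda-r(t))\phi_2(t)$, while the first, multiplied by $-q$ and combined with the operator identity (1.10) in the form $D_{1/q,-\omega/q}\phi_2(t)=(D_{q,\omega}\phi_2)(h^{-1}(t))$, gives $(D_{q,\omega}\phi_2)(h^{-1}(t))=q(p(t)-\lambda)\phi_1(t)$; replacing $t$ by $h(t)=qt+\omega$ turns this into $D_{q,\omega}\phi_2(t)=q(p(h(t))-\lambda)\phi_1(h(t))$. Integrating both relations from $\omega_0$ to $t$ and invoking the fundamental theorem of $q,\omega$-calculus (1.15)-(1.16) together with the initial data yields
\begin{align*}
\phi_1(t,\lambda)&=c_1+\int_{\omega_0}^{t}(\lambda-r(s))\,\phi_2(s,\lambda)\,d_{q,\omega}s,\\
\phi_2(t,\lambda)&=c_2+q\int_{\omega_0}^{t}(p(h(s))-\lambda)\,\phi_1(h(s),\lambda)\,d_{q,\omega}s.
\end{align*}
Since $h$ fixes $\omega_0$ and maps $[\omega_0,a]$ into itself, every composition is well defined; by the continuity part of the fundamental theorem (1.14) each right-hand side is continuous at $\omega_0$, so any solution automatically lies in $C_{q,\omega}^2(\omega_0,a)$. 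Conversely, applying $D_{q,\omega}$ and reusing (1.10) shows that a solution of this integral system solves (1.7) with $\phi_1(\omega_0,\lambda)=c_1$ and $\phi_2(\omega_0,\lambda)=c_2$, so the two problems are equivalent.

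Next I would set up the Picard iterates $\phi^{(0)}=(c_1,c_2)^{\intercal}$ and, for $n\ge1$,
\begin{align*}
\phi_1^{(n)}(t)&=c_1+\int_{\omega_0}^{t}(\lambda-r(s))\,\phi_2^{(n-1)}(s)\,d_{q,\omega}s,\\
\phi_2^{(n)}(t)&=c_2+q\int_{\omega_0}^{t}(p(h(s))-\lambda)\,\phi_1^{(n-1)}(h(s))\,d_{q,\omega}s,
\end{align*}
and estimate the consecutive differences. Letting $K<\infty$ bound $|\lambda-r|$ and $|p\circ h-\lambda|$ on $[\omega_0,a]$, the elementary evaluation $\int_{\omega_0}^{t}(s-\omega_0)^{m}\,d_{q,\omega}s=(t-\omega_0)^{m+1}/[m+1]_q$ together with the contraction $h(s)-\omega_0=q(s-\omega_0)$ lets me iterate a bound of the form $\|\phi^{(n)}(t)-\phi^{(n-1)}(t)\|\le C\,q^{\,\alpha_n}\,(K(t-\omega_0))^{n}/[n]_q!$, where the exponents $\alpha_n$ accumulated from repeated composition with $h$ grow quadratically in $n$. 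The telescoping series $\sum_n(\phi^{(n)}-\phi^{(n-1)})$ then converges uniformly on $[\omega_0,a]$, and its limit $\phi=(\phi_1,\phi_2)^{\intercal}$ satisfies the integral system, hence (1.7) with the prescribed initial values.

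For uniqueness, if $\phi$ and $\psi$ both solve the integral system with the same initial data, the difference $u=\phi-\psi$ satisfies the homogeneous system, and feeding it through the same estimate gives $\|u(t)\|\le\|u\|_\infty\,q^{\alpha_n}(K(t-\omega_0))^{n}/[n]_q!$ for every $n$; letting $n\to\infty$ forces $u\equiv0$. I expect the main obstacle to be precisely this convergence analysis: because the bare $q$-exponential series $\sum x^{n}/[n]_q!$ has the \emph{finite} radius of convergence $1/(1-q)$, uniform convergence on the whole of $[\omega_0,a]$ is not automatic and hinges on the super-geometric factor $q^{\alpha_n}$ with $\alpha_n=O(n^2)$ generated by the contraction $h(s)-\omega_0=q(s-\omega_0)$ — the same mechanism that makes $C_{q,\omega}$ and $S_{q,\omega}$ entire of order zero. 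Verifying that this quadratic accumulation of $q$-powers dominates the decay of $1/[n]_q!$ is the crux of the argument.
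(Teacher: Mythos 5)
Your proposal is correct, but its existence half takes a genuinely different route from the paper, while the uniqueness half coincides with the paper almost verbatim. For uniqueness, the paper also reduces (1.7) to exactly your pair of integral equations (its (2.19)--(2.22)), and its induction bounds (2.25)--(2.26), with factors $q^{\left\lfloor k^{2}/4\right\rfloor }$ and $q^{\frac{1}{8}\left\{ 2k^{2}+4k+\left( -1\right) ^{k+1}+1\right\} }$, are precisely your quadratically growing exponents $\alpha _{n}$ produced by the contraction $h\left( s\right) -\omega _{0}=q\left( s-\omega _{0}\right) $; your evaluation $\int_{\omega _{0}}^{t}\left( s-\omega _{0}\right) ^{m}d_{q,\omega }s=\left( t-\omega _{0}\right) ^{m+1}/\left[ m+1\right] _{q}$ is the same computation as in (2.27)--(2.28). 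For existence, however, the paper does not iterate these integral equations: it first builds the fundamental system (2.5) from $C_{q,\omega }$ and $S_{q,\omega }$ and runs a variation-of-parameters successive approximation (2.7)--(2.8), whose kernels it bounds only by the constants $\sqrt{B\left( \lambda \right) /2}$ of (2.9); the resulting estimate (2.10) carries no quadratic power of $q$, so the paper is forced to impose the additional smallness hypothesis (2.11), $\left\vert t-\omega _{0}\right\vert <1/\left\vert AB\left( \lambda \right) \left( 1-q\right) \right\vert $, to make its series converge. Your direct Picard scheme keeps the evaluation at $h\left( s\right) $ inside the second equation at every step, so the $q^{O\left( n^{2}\right) }$ factor survives and you get uniform convergence on all of $\left[ \omega _{0},a\right] $ with no restriction on $t$ or $\lambda $ --- in this respect your argument is both more elementary (no Wronskian lemma, no trigonometric fundamental solutions) and stronger than the paper's existence proof. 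What the paper's route buys instead is the explicit representation (2.15)--(2.16) of $\phi $ as a perturbation of $c_{1}\varphi _{1}+c_{2}\varphi _{2}$, the structural formula that underlies the eigenvalue asymptotics of Section 3; your limit function comes with no such representation. Finally, your identification of the crux is exactly right: since $\left[ n\right] _{q}!=\left( q;q\right) _{n}/\left( 1-q\right) ^{n}$, the bare series $\sum x^{n}/\left[ n\right] _{q}!$ has radius $1/\left( 1-q\right) $, and it is only the accumulated factor $q^{\alpha _{n}}$ with $\alpha _{n}\sim n^{2}/4$ that yields convergence on the whole interval --- the same mechanism behind the order-zero growth of $C_{q,\omega }$ and $S_{q,\omega }$.
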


\begin{proof}
Let%
\begin{equation}
\QATOP{\varphi _{1}\left( t,\lambda \right) =\left( 
\begin{array}{c}
\varphi _{11}\left( t,\lambda \right) \\ 
\varphi _{12}\left( t,\lambda \right)%
\end{array}%
\right) =\left( 
\begin{array}{c}
C_{q,\omega }\left( t,\lambda \right) \\ 
-\sqrt{q}S_{q,\omega }\left( t,\lambda \sqrt{q}\right)%
\end{array}%
\right) ,}{\varphi _{2}\left( t,\lambda \right) =\left( 
\begin{array}{c}
\varphi _{21}\left( t,\lambda \right) \\ 
\varphi _{22}\left( t,\lambda \right)%
\end{array}%
\right) =\left( 
\begin{array}{c}
S_{q,\omega }\left( t,\lambda \right) \\ 
C_{q,\omega }\left( t,\lambda \sqrt{q}\right)%
\end{array}%
\right) .}  \tag{2.5}
\end{equation}

Then the function 
\begin{equation}
y\left( t,\lambda \right) =\left( 
\begin{array}{c}
y_{1}\left( t,\lambda \right) \\ 
y_{2}\left( t,\lambda \right)%
\end{array}%
\right) =\left( 
\begin{array}{c}
c_{1}\varphi _{11}\left( t,\lambda \right) +c_{2}\varphi _{21}\left(
t,\lambda \right) \\ 
c_{1}\varphi _{12}\left( t,\lambda \right) +c_{2}\varphi _{22}\left(
t,\lambda \right)%
\end{array}%
\right) ,  \tag{2.6}
\end{equation}%
is a fundamental set of the $q,\omega -$system (1.7) for $p\left( t\right)
=r\left( t\right) =0.$ It is not hard to see that $W_{q,\omega }\left(
\varphi _{1},\varphi _{1}\right) \left( t,\lambda \right) =1.$

Define the sequence $\left\{ \psi _{m}\left( .,\lambda \right) \right\}
_{m=1}^{\infty }=\left( 
\begin{array}{c}
\psi _{m1}\left( .,\lambda \right) \\ 
\psi _{m2}\left( .,\lambda \right)%
\end{array}%
\right) _{m=1}^{\infty }$ of successive approximations by%
\begin{equation}
\psi _{1}\left( t,\lambda \right) =\left( 
\begin{array}{c}
\psi _{11}\left( t,\lambda \right) \\ 
\psi _{12}\left( t,\lambda \right)%
\end{array}%
\right) =\left( 
\begin{array}{c}
c_{1}\varphi _{11}\left( t,\lambda \right) +c_{2}\varphi _{21}\left(
t,\lambda \right) \\ 
c_{1}\varphi _{12}\left( t,\lambda \right) +c_{2}\varphi _{22}\left(
t,\lambda \right)%
\end{array}%
\right) ,  \tag{2.7}
\end{equation}

\begin{equation}
\begin{array}{l}
\psi _{m+1}\left( t,\lambda \right) =\left( 
\begin{array}{c}
\psi _{\left( m+1\right) 1}\left( t,\lambda \right) \\ 
\psi _{\left( m+1\right) 2}\left( t,\lambda \right)%
\end{array}%
\right) \\ 
=\left( 
\begin{array}{l}
{\small \psi }_{11}\left( t,\lambda \right) {\small +q}\dint\limits_{\omega
_{0}}^{t}\left\{ \varphi _{21}\left( t,\lambda \right) \varphi _{11}\left(
h\left( s\right) ,\lambda \right) -\varphi _{11}\left( t,\lambda \right)
\varphi _{21}\left( h\left( s\right) ,\lambda \right) \right\} {\small p}%
\left( h\left( s\right) \right) {\small \psi }_{m1}\left( h\left( s\right)
,\lambda \right) {\small d}_{q,\omega }{\small s} \\ 
{\small +}\dint\limits_{\omega _{0}}^{t}\left\{ \varphi _{21}\left(
t,\lambda \right) \varphi _{12}\left( s,\lambda \right) -\varphi _{11}\left(
t,\lambda \right) \varphi _{22}\left( s,\lambda \right) \right\} {\small r}%
\left( s\right) {\small \psi }_{m2}\left( s,\lambda \right) {\small d}%
_{q,\omega }{\small s} \\ 
{\small \psi }_{12}\left( t,\lambda \right) {\small +q}\dint\limits_{\omega
_{0}}^{t}\left\{ \varphi _{22}\left( t,\lambda \right) \varphi _{11}\left(
h\left( s\right) ,\lambda \right) -\varphi _{12}\left( t,\lambda \right)
\varphi _{21}\left( h\left( s\right) ,\lambda \right) \right\} {\small p}%
\left( h\left( s\right) \right) {\small \psi }_{m1}\left( h\left( s\right)
,\lambda \right) {\small d}_{q,\omega }{\small s} \\ 
{\small +}\dint\limits_{\omega _{0}}^{t}\left\{ \varphi _{22}\left(
t,\lambda \right) \varphi _{12}\left( s,\lambda \right) -\varphi _{12}\left(
t,\lambda \right) \varphi _{22}\left( s,\lambda \right) \right\} {\small r}%
\left( s\right) {\small \psi }_{m2}\left( s,\lambda \right) {\small d}%
_{q,\omega }{\small s}%
\end{array}%
\right)%
\end{array}
\tag{2.8}
\end{equation}%
$m=1,2,3,...~.$ For a fixed $\lambda \in 
\mathbb{C}
,$ there exist positive numbers $B\left( \lambda \right) ,$ $A_{1},~A_{2}$
and $A$ independent of $t$ such that%
\begin{equation}
\begin{array}{l}
\left\vert p\left( t\right) \right\vert \leq A_{1},~\left\vert r\left(
t\right) \right\vert \leq A_{2},\text{ }A=\max \left\{ A_{1},A_{2}\right\} ,
\\ 
\left\vert \varphi _{ij}\left( t,\lambda \right) \right\vert \leq \sqrt{%
\frac{B\left( \lambda \right) }{2}}~i,j=1,2,\text{ }t\in \left( \omega
_{0},a\right) .%
\end{array}
\tag{2.9}
\end{equation}

Let $K\left( \lambda \right) :=\left( \left\vert c_{1}\right\vert
+\left\vert c_{1}\right\vert \right) \sqrt{\frac{B\left( \lambda \right) }{2}%
}.$ Then from (2.9), $\left\vert \psi _{11}\left( t,\lambda \right)
\right\vert \leq K\left( \lambda \right) $ and $\left\vert \psi _{12}\left(
t,\lambda \right) \right\vert \leq K\left( \lambda \right) .$ Using
mathematical induction, we have 
\begin{equation}
\begin{array}{l}
\left\vert \psi _{\left( m+1\right) 1}\left( t,\lambda \right) -\psi
_{m1}\left( t,\lambda \right) \right\vert \leq K\left( \lambda \right) \frac{%
1}{2}\left( -1;q\right) _{m+1}\frac{\left( AB\left( \lambda \right) \left(
t\left( 1-q\right) -\omega \right) \right) ^{m}}{\left( q;q\right) _{m}}, \\ 
\left\vert \psi _{\left( m+1\right) 2}\left( t,\lambda \right) -\psi
_{m2}\left( t,\lambda \right) \right\vert \leq K\left( \lambda \right) \frac{%
1}{2}\left( -1;q\right) _{m+1}\frac{\left( AB\left( \lambda \right) \left(
t\left( 1-q\right) -\omega \right) \right) ^{m}}{\left( q;q\right) _{m}},%
\end{array}%
m=1,2,...  \tag{2.10}
\end{equation}

To guarantee the converge of the term on the right side in (2.10), we assume
additionally that%
\begin{equation}
\left\vert t-\omega _{0}\right\vert <\frac{1}{\left\vert AB\left( \lambda
\right) \left( 1-q\right) \right\vert },~t\in \left( \omega _{0},a\right) . 
\tag{2.11}
\end{equation}

Consequently by Weierstrass' test the series 
\begin{equation}
\begin{array}{l}
\psi _{11}\left( t,\lambda \right) +\dsum\limits_{m=1}^{\infty }\psi
_{\left( m+1\right) 1}\left( t,\lambda \right) -\psi _{m1}\left( t,\lambda
\right)  \\ 
\psi _{12}\left( t,\lambda \right) +\dsum\limits_{m=1}^{\infty }\psi
_{\left( m+1\right) 2}\left( t,\lambda \right) -\psi _{m2}\left( t,\lambda
\right) 
\end{array}
\tag{2.12}
\end{equation}%
converges uniformly on $\left( \omega _{0},a\right) .$ Since the $m$th
partial sums of the series is $\psi _{m+1}\left( t,\lambda \right) =\left( 
\begin{array}{c}
\psi _{\left( m+1\right) 1}\left( t,\lambda \right)  \\ 
\psi _{\left( m+1\right) 2}\left( t,\lambda \right) 
\end{array}%
\right) ,$ then $\psi _{m+1}\left( .,\lambda \right) $ approaches a function 
$\phi \left( .,\lambda \right) =\left( 
\begin{array}{c}
\phi _{1}\left( .,\lambda \right)  \\ 
\phi _{2}\left( .,\lambda \right) 
\end{array}%
\right) $ uniformly on $\left( \omega _{0},a\right) .$ We can also prove by
induction on $m$ that $\psi _{mi}\left( t,\lambda \right) $ and $D_{q,\omega
}\psi _{mi}\left( t,\lambda \right) $ $\left( i=1,2\right) $ are continuous
at $\omega _{0}$, where%
\begin{equation}
\begin{array}{l}
D_{q,\omega }\psi _{\left( m+1\right) 1}\left( t,\lambda \right)
=c_{1}D_{q,\omega }\varphi _{11}\left( t,\lambda \right) +c_{2}D_{q,\omega
}\varphi _{21}\left( t,\lambda \right)  \\ 
+{\small q}\dint\limits_{\omega _{0}}^{t}\left\{ D_{q,\omega }\left( \varphi
_{21}\left( t,\lambda \right) \right) \varphi _{11}\left( h\left( s\right)
,\lambda \right) -D_{q,\omega }\left( \varphi _{11}\left( t,\lambda \right)
\right) \varphi _{21}\left( h\left( s\right) ,\lambda \right) \right\} 
{\small p}\left( h\left( s\right) \right) {\small \psi }_{m1}\left( h\left(
s\right) ,\lambda \right) {\small d}_{q,\omega }{\small s} \\ 
+\dint\limits_{\omega _{0}}^{t}\left\{ D_{q,\omega }\left( \varphi
_{21}\left( t,\lambda \right) \right) \varphi _{12}\left( s,\lambda \right)
-D_{q,\omega }\left( \varphi _{11}\left( t,\lambda \right) \right) \varphi
_{22}\left( s,\lambda \right) \right\} {\small r}\left( s\right) {\small %
\psi }_{m2}\left( s,\lambda \right) {\small d}_{q,\omega }{\small s}%
\end{array}
\tag{2.13}
\end{equation}%
\begin{equation}
\begin{array}{l}
D_{q,\omega }\psi _{\left( m+1\right) 2}\left( t,\lambda \right)
=c_{1}D_{q,\omega }\varphi _{12}\left( t,\lambda \right) +c_{2}D_{q,\omega
}\varphi _{22}\left( t,\lambda \right)  \\ 
+{\small q}\dint\limits_{\omega _{0}}^{t}\left\{ D_{q,\omega }\left( \varphi
_{22}\left( t,\lambda \right) \right) \varphi _{11}\left( h\left( s\right)
,\lambda \right) -D_{q,\omega }\left( \varphi _{12}\left( t,\lambda \right)
\right) \varphi _{21}\left( h\left( s\right) ,\lambda \right) \right\} 
{\small p}\left( h\left( s\right) \right) {\small \psi }_{m1}\left( h\left(
s\right) ,\lambda \right) {\small d}_{q,\omega }{\small s} \\ 
+\dint\limits_{\omega _{0}}^{t}\left\{ D_{q,\omega }\left( \varphi
_{22}\left( t,\lambda \right) \right) \varphi _{12}\left( s,\lambda \right)
-D_{q,\omega }\left( \varphi _{12}\left( t,\lambda \right) \right) \varphi
_{22}\left( s,\lambda \right) \right\} {\small r}\left( s\right) {\small %
\psi }_{m2}\left( s,\lambda \right) {\small d}_{q,\omega }{\small s}%
\end{array}
\tag{2.14}
\end{equation}%
$m=1,2,3,...~.$ Therefore, the functions $\phi _{i}\left( .,\lambda \right) $%
\ and $D_{q,\omega }\phi _{i}\left( .,\lambda \right) $\ ~$\left(
i=1,2\right) ~$are continuous at $\omega _{0}$, i.e., $\phi \left( .,\lambda
\right) \in H_{q,\omega }.~$Let $m\rightarrow \infty $ in (2.8), we obtain $%
\phi \left( t,\lambda \right) =\left( 
\begin{array}{c}
\phi _{1}\left( t,\lambda \right)  \\ 
\phi _{2}\left( t,\lambda \right) 
\end{array}%
\right) $ where%
\begin{equation}
\begin{array}{l}
\phi _{1}\left( t,\lambda \right) =c_{1}\varphi _{11}\left( t,\lambda
\right) +c_{2}\varphi _{21}\left( t,\lambda \right)  \\ 
{\small +q}\dint\limits_{\omega _{0}}^{t}\left\{ \varphi _{21}\left(
t,\lambda \right) \varphi _{11}\left( h\left( s\right) ,\lambda \right)
-\varphi _{11}\left( t,\lambda \right) \varphi _{21}\left( h\left( s\right)
,\lambda \right) \right\} {\small p}\left( h\left( s\right) \right) \phi
_{1}\left( h\left( s\right) ,\lambda \right) {\small d}_{q,\omega }{\small s}
\\ 
{\small +}\dint\limits_{\omega _{0}}^{t}\left\{ \varphi _{21}\left(
t,\lambda \right) \varphi _{12}\left( s,\lambda \right) -\varphi _{11}\left(
t,\lambda \right) \varphi _{22}\left( s,\lambda \right) \right\} {\small r}%
\left( s\right) \phi _{2}\left( s,\lambda \right) {\small d}_{q,\omega }%
{\small s,}%
\end{array}
\tag{2.15}
\end{equation}%
\begin{equation}
\begin{array}{l}
\phi _{2}\left( t,\lambda \right) =c_{1}\varphi _{12}\left( t,\lambda
\right) +c_{2}\varphi _{22}\left( t,\lambda \right)  \\ 
{\small +q}\dint\limits_{\omega _{0}}^{t}\left\{ \varphi _{22}\left(
t,\lambda \right) \varphi _{11}\left( h\left( s\right) ,\lambda \right)
-\varphi _{12}\left( t,\lambda \right) \varphi _{21}\left( h\left( s\right)
,\lambda \right) \right\} {\small p}\left( h\left( s\right) \right) \phi
_{1}\left( h\left( s\right) ,\lambda \right) {\small d}_{q,\omega }{\small s}
\\ 
{\small +}\dint\limits_{\omega _{0}}^{t}\left\{ \varphi _{22}\left(
t,\lambda \right) \varphi _{12}\left( s,\lambda \right) -\varphi _{12}\left(
t,\lambda \right) \varphi _{22}\left( s,\lambda \right) \right\} {\small r}%
\left( s\right) \phi _{2}\left( s,\lambda \right) {\small d}_{q,\omega }%
{\small s.}%
\end{array}
\tag{2.16}
\end{equation}

Now we prove that $\phi \left( .,\lambda \right) $ solves the $q,\omega -$%
system (1.7) subject to (2.4). It is not hard to see that $\phi _{1}\left(
\omega _{0},\lambda \right) =c_{1}$ and $\phi _{2}\left( \omega _{0},\lambda
\right) =c_{2}.$ From (2.15) and using (1.11) and (1.15), we have 
\begin{equation}
\begin{array}{l}
D_{q,\omega }\phi _{1}\left( t,\lambda \right) =c_{1}D_{q,\omega }\varphi
_{11}\left( t,\lambda \right) +c_{2}D_{q,\omega }\varphi _{21}\left(
t,\lambda \right) \\ 
+{\small q}\dint\limits_{\omega _{0}}^{t}\left\{ D_{q,\omega }\left( \varphi
_{21}\left( t,\lambda \right) \right) \varphi _{11}\left( h\left( s\right)
,\lambda \right) -D_{q,\omega }\left( \varphi _{11}\left( t,\lambda \right)
\right) \varphi _{21}\left( h\left( s\right) ,\lambda \right) \right\} 
{\small p}\left( h\left( s\right) \right) \phi _{1}\left( h\left( s\right)
,\lambda \right) {\small d}_{q,\omega }{\small s} \\ 
+\dint\limits_{\omega _{0}}^{t}\left\{ D_{q,\omega }\left( \varphi
_{21}\left( t,\lambda \right) \right) \varphi _{12}\left( s,\lambda \right)
-D_{q,\omega }\left( \varphi _{11}\left( t,\lambda \right) \right) \varphi
_{22}\left( s,\lambda \right) \right\} {\small r}\left( s\right) \phi
_{2}\left( s,\lambda \right) {\small d}_{q,\omega }{\small s} \\ 
+\left\{ \varphi _{21}\left( h\left( t\right) ,\lambda \right) \varphi
_{12}\left( t,\lambda \right) -\varphi _{11}\left( h\left( t\right) ,\lambda
\right) \varphi _{22}\left( t,\lambda \right) \right\} {\small r}\left(
t\right) \phi _{2}\left( t,\lambda \right) .%
\end{array}
\tag{2.17}
\end{equation}%
Since the function (2.6) is a fundamental solution set of the $q,\omega -$%
system (1.7) for $p\left( t\right) =0=r\left( t\right) $\ and from Corollary
2.2, we get%
\begin{equation}
D_{q,\omega }\phi _{1}\left( t,\lambda \right) =\lambda \phi _{2}\left(
t,\lambda \right) -r\left( t\right) \phi _{2}\left( t,\lambda \right) . 
\tag{2.18}
\end{equation}%
The validity of the other equation in the $q,\omega -$system (1.7) is proved
similarly. For the uniqueness assume $Y\left( t,\lambda \right) =\left( 
\begin{array}{c}
Y_{1}\left( t,\lambda \right) \\ 
Y_{2}\left( t,\lambda \right)%
\end{array}%
\right) $ and $Z\left( t,\lambda \right) =\left( 
\begin{array}{c}
Z_{1}\left( t,\lambda \right) \\ 
Z_{2}\left( t,\lambda \right)%
\end{array}%
\right) $ are two solutions of the $q,\omega -$system (1.7) together with
(2.4). Applying the $q,\omega -$\ integration to the $q,\omega -$system
(1.7), yields%
\begin{equation}
Y_{1}\left( t,\lambda \right) =c_{1}+\dint\limits_{\omega _{0}}^{t}\left(
\lambda -{\small r}\left( s\right) \right) Y_{2}\left( s,\lambda \right) 
{\small d}_{q,\omega }{\small s,}  \tag{2.19}
\end{equation}%
\begin{equation}
Y_{2}\left( t,\lambda \right) =c_{2}+q\dint\limits_{\omega _{0}}^{t}\left(
p\left( h\left( s\right) \right) -\lambda \right) Y_{1}\left( h\left(
s\right) ,\lambda \right) {\small d}_{q,\omega }{\small s,}  \tag{2.20}
\end{equation}%
and%
\begin{equation}
Z_{1}\left( t,\lambda \right) =c_{1}+\dint\limits_{\omega _{0}}^{t}\left(
\lambda -{\small r}\left( s\right) \right) Z_{2}\left( s,\lambda \right) 
{\small d}_{q,\omega }{\small s,}  \tag{2.21}
\end{equation}%
\begin{equation}
Z_{2}\left( t,\lambda \right) =c_{2}+q\dint\limits_{\omega _{0}}^{t}\left(
p\left( h\left( s\right) \right) -\lambda \right) Z_{1}\left( h\left(
s\right) ,\lambda \right) {\small d}_{q,\omega }{\small s.}  \tag{2.22}
\end{equation}%
Since $Y\left( t,\lambda \right) ,~Z\left( t,\lambda \right) ,~p\left(
t\right) $ and $r\left( t\right) $ are continuous at $\omega _{0}$, then
exist positive numbers $N_{\lambda ,t}$, $M_{\lambda ,t}$ \ such that%
\begin{equation}
\begin{array}{l}
\sup_{n\in 
\mathbb{N}
}\left\vert Y_{i}\left( h^{n}\left( t\right) ,\lambda \right) \right\vert
=N_{i\lambda ,t},~~\sup_{n\in 
\mathbb{N}
}\left\vert Z_{i}\left( h^{n}\left( t\right) ,\lambda \right) \right\vert =%
\widetilde{N}_{i\lambda ,t},~ \\ 
N_{\lambda ,t}=\max \left\{ N_{i\lambda ,t},\widetilde{\text{ }N}_{i\lambda
,t}\right\} ,~i=1,2,%
\end{array}
\tag{2.23}
\end{equation}%
\begin{equation}
\begin{array}{l}
\sup_{n\in 
\mathbb{N}
}\left\vert \lambda -r\left( h^{n}\left( t\right) \right) \right\vert
=M_{1\lambda ,t},~\sup_{n\in 
\mathbb{N}
}\left\vert \lambda -p\left( h^{n}\left( t\right) \right) \right\vert
=M_{2\lambda ,t}, \\ 
M_{\lambda ,t}=\max \left\{ M_{1\lambda ,t},~M_{2\lambda ,t}\right\} .%
\end{array}
\tag{2.24}
\end{equation}

We can prove by mathematical induction on $k$ that 
\begin{equation}
\left\vert Y_{1}\left( t,\lambda \right) -Z_{1}\left( t,\lambda \right)
\right\vert \leq 2q^{\left\lfloor \frac{k^{2}}{4}\right\rfloor }M_{\lambda
,t}^{k}N_{\lambda ,t}\frac{\left( t\left( 1-q\right) -\omega \right) ^{k}}{%
\left( q;q\right) _{k}},  \tag{2.25}
\end{equation}%
\begin{equation}
\left\vert Y_{2}\left( t,\lambda \right) -Z_{2}\left( t,\lambda \right)
\right\vert \leq 2q^{\frac{1}{8}\left\{ 2k^{2}+4k+\left( -1\right)
^{k+1}+1\right\} }M_{\lambda ,t}^{k}N_{\lambda ,t}\frac{\left( t\left(
1-q\right) -\omega \right) ^{k}}{\left( q;q\right) _{k}},  \tag{2.26}
\end{equation}%
$k\in 
\mathbb{N}
,~t\in \left( \omega _{0},a\right) .~$Indeed, if (2.25) holds at $k\in 
\mathbb{N}
$, then from (2.19) and (2.21)%
\begin{equation}
\begin{array}{l}
\left\vert Y_{1}\left( t,\lambda \right) -Z_{1}\left( t,\lambda \right)
\right\vert \leq M_{\lambda ,t}2q^{\frac{1}{8}\left\{ 2k^{2}+4k+\left(
-1\right) ^{k+1}+1\right\} }\frac{M_{\lambda ,t}^{k}N_{\lambda ,t}}{\left(
q;q\right) _{k}}\dint\limits_{\omega _{0}}^{t}\left( s\left( 1-q\right)
-\omega \right) ^{k}{\small d}_{q,\omega }{\small s} \\ 
\text{ \ \ \ \ \ \ \ \ \ \ \ \ \ \ \ \ }=2q^{\frac{1}{8}\left\{
2k^{2}+4k+\left( -1\right) ^{k+1}+1\right\} }\frac{M_{\lambda
,t}^{k+1}N_{\lambda ,t}}{\left( q;q\right) _{k}}\sum\limits_{n=1}^{\infty
}\left( t\left( 1-q\right) -\omega \right) q^{n}q^{nk}\left( t\left(
1-q\right) -\omega \right) ^{k} \\ 
\text{ \ \ \ \ \ \ \ \ \ \ \ \ \ \ \ \ }=2q^{\frac{1}{8}\left\{
2k^{2}+4k+\left( -1\right) ^{k+1}+1\right\} }N_{\lambda ,t}\frac{\left(
M_{\lambda ,t}\left( t\left( 1-q\right) -\omega \right) \right) ^{k+1}}{%
\left( q;q\right) _{k+1}}.%
\end{array}
\tag{2.27}
\end{equation}%
Similarly, if (2.26) holds at $k\in 
\mathbb{N}
$, then from (2.20) and (2.22)%
\begin{equation}
\begin{array}{l}
\left\vert Y_{2}\left( t,\lambda \right) -Z_{2}\left( t,\lambda \right)
\right\vert \leq qM_{\lambda ,t}2q^{\left\lfloor \frac{k^{2}}{4}%
\right\rfloor }\frac{M_{\lambda ,t}^{k}N_{\lambda ,t}}{\left( q;q\right) _{k}%
}\dint\limits_{\omega _{0}}^{t}\left( h\left( s\right) \left( 1-q\right)
-\omega \right) ^{k}{\small d}_{q,\omega }{\small s} \\ 
\text{ \ \ \ \ \ \ \ \ \ \ \ \ }=2qq^{\left\lfloor \frac{k^{2}}{4}%
\right\rfloor }\frac{M_{\lambda ,t}^{k+1}N_{\lambda ,t}}{\left( q;q\right)
_{k}}\sum\limits_{n=1}^{\infty }\left( t\left( 1-q\right) -\omega \right)
q^{n}q^{\left( n+1\right) k}\left( t\left( 1-q\right) -\omega \right) ^{k}
\\ 
\text{ \ \ \ \ \ \ \ \ \ \ \ \ }=2q^{k+1+\left\lfloor \frac{k^{2}}{4}%
\right\rfloor }N_{\lambda ,t}\frac{\left( M_{\lambda ,t}\left( t\left(
1-q\right) -\omega \right) \right) ^{k+1}}{\left( q;q\right) _{k+1}}.%
\end{array}
\tag{2.28}
\end{equation}%
Hence (2.25) and (2.26) hold true at $\left( k+1\right) .$ Consequently
(2.25) and (2.26) are true for all $k\in 
\mathbb{N}
$\ because from (2.23) it is satisfied at $k=0.$Since%
\begin{equation}
\lim_{k\rightarrow \infty }2q^{\left\lfloor \frac{k^{2}}{4}\right\rfloor
}M_{\lambda ,t}^{k}N_{\lambda ,t}\frac{\left( t\left( 1-q\right) -\omega
\right) ^{k}}{\left( q;q\right) _{k}}=0,  \tag{2.29}
\end{equation}%
and%
\begin{equation}
\lim_{k\rightarrow \infty }2q^{\frac{1}{8}\left\{ 2k^{2}+4k+\left( -1\right)
^{k+1}+1\right\} }M_{\lambda ,t}^{k}N_{\lambda ,t}\frac{\left( t\left(
1-q\right) -\omega \right) ^{k}}{\left( q;q\right) _{k}}=0,  \tag{2.30}
\end{equation}%
then $Y_{1}\left( t,\lambda \right) =Z_{1}\left( t,\lambda \right) $ and $%
Y_{2}\left( t,\lambda \right) =Z_{2}\left( t,\lambda \right) $ for al $t\in
\left( \omega _{0},a\right) $, i.e., \ $Y\left( t,\lambda \right) =Z\left(
t,\lambda \right) $. This proves the uniqueness.
\end{proof}

\begin{lemma}
If $\lambda _{1}$\ and $\lambda _{2}$\ are two different eigenvalues of the $%
q,\omega -$system (1.7)-(1.9), then the corresponding eigenfunctions $%
y\left( t,\lambda _{1}\right) $\ and $z\left( t,\lambda _{2}\right) $ are
orthogonal, i.e., 
\begin{equation}
\dint\limits_{\omega _{0}}^{a}y^{\intercal }\left( t,\lambda _{1}\right)
z\left( t,\lambda _{2}\right) {\small d}_{q,\omega }{\small t=}%
\dint\limits_{\omega _{0}}^{a}\left\{ y_{1}\left( t,\lambda _{1}\right)
z_{1}\left( t,\lambda _{2}\right) +y_{2}\left( t,\lambda _{1}\right)
z_{2}\left( t,\lambda _{2}\right) \right\} {\small d}_{q,\omega }{\small t=0,%
}  \tag{2.31}
\end{equation}%
where $y^{\intercal }=\left( y_{1},y_{2}\right) .$
\end{lemma}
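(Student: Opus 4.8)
The plan is to derive a Lagrange-type (Green's) identity for the $q,\omega-$Dirac operator and integrate it over $\left[\omega_0,a\right]$. Writing the system (1.7) as $\ell(y)=\lambda y$ with $\ell(y)=\bigl(-\tfrac1q D_{\frac1q,\frac{-\omega}{q}}y_2+py_1,\ D_{q,\omega}y_1+ry_2\bigr)^\intercal$, I would first establish the identity
\begin{equation*}
\bigl(\ell y\bigr)^\intercal z-y^\intercal\bigl(\ell z\bigr)=D_{q,\omega}W_{q,\omega}(y,z)(t),
\end{equation*}
valid for any $y,z\in H_{q,\omega}$. Granting this, and using $\ell y=\lambda_1 y$ and $\ell z=\lambda_2 z$, the left-hand side equals $(\lambda_1-\lambda_2)\,y^\intercal z$, so integrating by the fundamental theorem (1.16) gives
\begin{equation*}
(\lambda_1-\lambda_2)\int_{\omega_0}^{a}y^\intercal(t,\lambda_1)z(t,\lambda_2)\,d_{q,\omega}t=W_{q,\omega}(y,z)(a)-W_{q,\omega}(y,z)(\omega_0).
\end{equation*}
The result then follows once the two boundary terms are shown to vanish.

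To prove the Lagrange identity I would expand $(\ell y)^\intercal z-y^\intercal(\ell z)$ termwise; the potential terms $py_1z_1$ and $ry_2z_2$ cancel immediately, leaving
\begin{equation*}
-\tfrac1q\bigl(z_1D_{\frac1q,\frac{-\omega}{q}}y_2-y_1D_{\frac1q,\frac{-\omega}{q}}z_2\bigr)+\bigl(z_2D_{q,\omega}y_1-y_2D_{q,\omega}z_1\bigr).
\end{equation*}
The key computational step is the relation $D_{q,\omega}\bigl(f(h^{-1}(\cdot))\bigr)(t)=\tfrac1q D_{\frac1q,\frac{-\omega}{q}}f(t)$, which follows directly from (1.10) and the definitions of $h^{-1}$ and of the two operators. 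Substituting this converts the first bracket into $D_{q,\omega}$-derivatives of $z_2\circ h^{-1}$ and $y_2\circ h^{-1}$. On the other hand, applying the product rule (1.11) to $W_{q,\omega}(y,z)=y_1\,(z_2\circ h^{-1})-z_1\,(y_2\circ h^{-1})$, with the composed factor taken as the first factor so that the shift produces $(z_2\circ h^{-1})(h(t))=z_2(t)$ (using $h^{-1}\circ h=\mathrm{id}$), reproduces exactly the expression above. This identification is the part that requires the most care, since one must choose the order of the factors in (1.11) correctly and keep track of the argument shift $h(t)=qt+\omega$.

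For the boundary terms I would use that $\omega_0$ is the fixed point of $h$, i.e. $h^{-1}(\omega_0)=\omega_0$, so that $W_{q,\omega}(y,z)(\omega_0)=y_1(\omega_0)z_2(\omega_0)-z_1(\omega_0)y_2(\omega_0)$. Since $y$ and $z$ both satisfy (1.8), the nonzero vector $(k_{11},k_{12})$ lies in the kernel of the matrix with rows $(y_1(\omega_0),y_2(\omega_0))$ and $(z_1(\omega_0),z_2(\omega_0))$; hence its determinant, which is precisely $W_{q,\omega}(y,z)(\omega_0)$, vanishes. The same argument at $t=a$ with condition (1.9) and the nonzero vector $(k_{21},k_{22})$ forces $y_1(a)z_2(h^{-1}(a))-z_1(a)y_2(h^{-1}(a))=W_{q,\omega}(y,z)(a)=0$. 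With both endpoint terms gone, $(\lambda_1-\lambda_2)\int_{\omega_0}^{a}y^\intercal z\,d_{q,\omega}t=0$, and since $\lambda_1\neq\lambda_2$ the integral vanishes, which is (2.31). The main obstacle is the bookkeeping in the Lagrange identity; once it is in place the boundary step is routine linear algebra.
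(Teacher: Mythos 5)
Your proposal is correct and follows essentially the same route as the paper: form the bilinear combination $(\ell y)^\intercal z - y^\intercal(\ell z)$, recognize it via (1.10) and the product rule (1.11) as $D_{q,\omega}W_{q,\omega}(y,z)(t)$ (the paper's (2.32)--(2.33)), integrate with (1.16), and kill the endpoint terms with the boundary conditions (1.8)--(1.9). If anything, you are slightly more careful than the paper at two points it leaves implicit: the exact relation $D_{q,\omega}\bigl(f\circ h^{-1}\bigr)(t)=\tfrac1q D_{\frac1q,\frac{-\omega}{q}}f(t)$ (which differs from the literal statement of (1.10) by where the composition sits), and the determinant/kernel argument showing $W_{q,\omega}(y,z)$ vanishes at $\omega_0$ and $a$.
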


\begin{proof}
Since $y\left( t,\lambda _{1}\right) $\ and $z\left( t,\lambda _{2}\right) $
are solutions of the $q,\omega -$system (1.7)-(1.9)%
\begin{equation*}
\left\{ 
\begin{array}{l}
-\dfrac{1}{q}D_{\frac{1}{q},\frac{-\omega }{q}}y_{2}\left( t,\lambda
_{1}\right) +\left\{ p\left( t\right) -\lambda _{1}\right\} y_{1}\left(
t,\lambda _{1}\right) =0, \\ 
D_{q,\omega }y_{1}\left( t,\lambda _{1}\right) +\left\{ r\left( t\right)
-\lambda _{1}\right\} y_{2}\left( t,\lambda _{1}\right) =0,%
\end{array}%
\right.
\end{equation*}%
and%
\begin{equation*}
\left\{ 
\begin{array}{l}
-\dfrac{1}{q}D_{\frac{1}{q},\frac{-\omega }{q}}z_{2}\left( t,\lambda
_{2}\right) +\left\{ p\left( t\right) -\lambda _{2}\right\} z_{1}\left(
t,\lambda _{2}\right) =0, \\ 
D_{q,\omega }z_{1}\left( t,\lambda _{2}\right) +\left\{ r\left( t\right)
-\lambda _{2}\right\} z_{2}\left( t,\lambda _{2}\right) =0.%
\end{array}%
\right.
\end{equation*}%
Multiplying by $z_{1}$,$~z_{2},-y_{1}~$and $-y_{2},~$respectively, and
adding together we have%
\begin{equation}
\begin{array}{l}
-\dfrac{1}{q}D_{\frac{1}{q},\frac{-\omega }{q}}\left( y_{2}\left( t,\lambda
_{1}\right) \right) z_{1}\left( t,\lambda _{2}\right) +D_{q,\omega }\left(
y_{1}\left( t,\lambda _{1}\right) \right) z_{2}\left( t,\lambda _{2}\right)
\\ 
+\dfrac{1}{q}D_{\frac{1}{q},\frac{-\omega }{q}}\left( z_{2}\left( t,\lambda
_{2}\right) \right) y_{1}\left( t,\lambda _{1}\right) -D_{q,\omega }\left(
z_{1}\left( t,\lambda _{2}\right) \right) y_{2}\left( t,\lambda _{1}\right)
\\ 
=\lambda _{1}\left\{ y_{1}\left( t,\lambda _{1}\right) z_{1}\left( t,\lambda
_{2}\right) +y_{2}\left( t,\lambda _{1}\right) z_{2}\left( t,\lambda
_{2}\right) \right\} -\lambda _{2}\left\{ y_{1}\left( t,\lambda _{1}\right)
z_{1}\left( t,\lambda _{2}\right) +y_{2}\left( t,\lambda _{1}\right)
z_{2}\left( t,\lambda _{2}\right) \right\} .%
\end{array}
\tag{2.32}
\end{equation}%
Using (1.10) and (1.11), we obtain%
\begin{equation}
\begin{array}{l}
D_{q,\omega }\left\{ y_{1}\left( t,\lambda _{1}\right) z_{2}\left(
h^{-1}\left( t\right) ,\lambda _{2}\right) -y_{2}\left( h^{-1}\left(
t\right) ,\lambda _{1}\right) z_{1}\left( t,\lambda _{2}\right) \right\} \\ 
=\left( \lambda _{1}-\lambda _{2}\right) \left\{ y_{1}\left( t,\lambda
_{1}\right) z_{1}\left( t,\lambda _{2}\right) +y_{2}\left( t,\lambda
_{1}\right) z_{2}\left( t,\lambda _{2}\right) \right\} .%
\end{array}
\tag{2.33}
\end{equation}%
Applying the $q,\omega -$integration to (2.33), yields%
\begin{equation}
\begin{array}{l}
\left( \lambda _{1}-\lambda _{2}\right) \dint\limits_{\omega
_{0}}^{a}\left\{ y_{1}\left( t,\lambda _{1}\right) z_{1}\left( t,\lambda
_{2}\right) +y_{2}\left( t,\lambda _{1}\right) z_{2}\left( t,\lambda
_{2}\right) \right\} {\small d}_{q,\omega }{\small t} \\ 
\left. \left\{ y_{1}\left( t,\lambda _{1}\right) z_{2}\left( h^{-1}\left(
t\right) ,\lambda _{2}\right) -y_{2}\left( h^{-1}\left( t\right) ,\lambda
_{1}\right) z_{1}\left( t,\lambda _{2}\right) \right\} \right\vert _{\omega
_{0}}^{a}.%
\end{array}
\tag{2.34}
\end{equation}%
It follows from the boundary conditions (1.8) and (1.9) the right-hand side
vanishes. It is concluded that%
\begin{equation}
\left( \lambda _{1}-\lambda _{2}\right) \dint\limits_{\omega
_{0}}^{a}y^{\intercal }\left( t,\lambda _{1}\right) z\left( t,\lambda
_{2}\right) {\small d}_{q,\omega }{\small t=0.}  \tag{2.35}
\end{equation}%
The lemma is thus proved, since $\lambda _{1}\neq \lambda _{2}.$
\end{proof}

\begin{lemma}
The eigenvalues of the $q,\omega -$system (1.7)-(1.9) are real.
\end{lemma}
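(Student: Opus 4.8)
The plan is to exploit the invariance of the system under complex conjugation together with the orthogonality relation (2.31) already established. Specifically, I would first show that if $\lambda$ is an eigenvalue with a nontrivial eigenfunction $y\left( t,\lambda \right) =\left( y_{1}\left( t,\lambda \right) ,y_{2}\left( t,\lambda \right) \right) ^{\intercal }$, then its complex conjugate $\overline{\lambda }$ is likewise an eigenvalue, with eigenfunction $\overline{y}\left( t,\overline{\lambda }\right) =\left( \overline{y_{1}\left( t,\lambda \right) },\overline{y_{2}\left( t,\lambda \right) }\right) ^{\intercal }$. The contradiction will then come from pairing $y$ against $\overline{y}$ through the orthogonality lemma.

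For the conjugation step, I would take complex conjugates throughout the $q,\omega -$system (1.7). Since the difference quotients defining $D_{q,\omega }$ and $D_{\frac{1}{q},\frac{-\omega }{q}}$ have real coefficients (the denominator $\left( qt+\omega \right) -t$ is real for real $t$), conjugation commutes with these operators, i.e. $\overline{D_{q,\omega }f}=D_{q,\omega }\overline{f}$ and similarly for $D_{\frac{1}{q},\frac{-\omega }{q}}$; moreover $p\left( .\right) $ and $r\left( .\right) $ are real-valued by hypothesis. Hence applying the bar to (1.7) shows that $\overline{y}$ solves the same system with $\lambda $ replaced by $\overline{\lambda }$, and one checks that $\overline{y}$ and $D_{q,\omega }\overline{y}$ remain continuous at $\omega _{0}$, so $\overline{y}\in H_{q,\omega }$. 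Because the boundary coefficients $k_{ij}$ are real, conjugating (1.8)--(1.9) shows $\overline{y}$ also satisfies the boundary conditions, so $\overline{y}\left( t,\overline{\lambda }\right) $ is a genuine eigenfunction for the eigenvalue $\overline{\lambda }$.

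Now suppose, for contradiction, that $\lambda $ is not real, so that $\lambda \neq \overline{\lambda }$. Then $\lambda $ and $\overline{\lambda }$ are two distinct eigenvalues, and the orthogonality relation (2.31), applied with $z$ taken to be $\overline{y}$, yields
\begin{equation*}
\int\limits_{\omega _{0}}^{a}\left\{ y_{1}\left( t,\lambda \right) \overline{y_{1}\left( t,\lambda \right) }+y_{2}\left( t,\lambda \right) \overline{y_{2}\left( t,\lambda \right) }\right\} d_{q,\omega }t=\int\limits_{\omega _{0}}^{a}\left\{ \left\vert y_{1}\left( t,\lambda \right) \right\vert ^{2}+\left\vert y_{2}\left( t,\lambda \right) \right\vert ^{2}\right\} d_{q,\omega }t=0.
\end{equation*}
But the integrand is nonnegative, and by the Jackson--N\"{o}rlund representation (1.13) the $q,\omega -$integral over $\left[ \omega _{0},a\right] $ assigns the weight $\left( a\left( 1-q\right) -\omega \right) q^{k}$ to each sample point, which is positive since $a>\omega _{0}$ forces $a\left( 1-q\right) -\omega >0$. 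Hence the integral equals $\left\Vert y\left( .,\lambda \right) \right\Vert _{H_{q,\omega }}^{2}$ and is strictly positive because the eigenfunction $y$ is nontrivial. This is the desired contradiction, so $\lambda =\overline{\lambda }$ and every eigenvalue is real.

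I expect the main obstacle to be the conjugation step rather than the final estimate: one must verify carefully that taking complex conjugates commutes with both Hahn operators and preserves membership in $H_{q,\omega }$, so that $\overline{y}\left( t,\overline{\lambda }\right) $ is a legitimate eigenfunction to which the orthogonality lemma genuinely applies. Once that is secured, the positivity of the norm integral, though routine, still hinges on the strict positivity of the weights in (1.13), which is why the condition $a>\omega _{0}$ is essential.
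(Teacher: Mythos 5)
Your proof is correct and takes essentially the same route as the paper's: assume a nonreal eigenvalue $\lambda$, note that $\overline{\lambda}$ is then an eigenvalue with eigenfunction $\overline{y}$, and apply the orthogonality relation (2.31) with $z=\overline{y}$ to force $\int_{\omega _{0}}^{a}\left\{ \left\vert y_{1}\right\vert ^{2}+\left\vert y_{2}\right\vert ^{2}\right\} d_{q,\omega }t=0$, contradicting nontriviality. The only difference is that you explicitly verify the conjugation step and the strict positivity of the Jackson--N\"{o}rlund weights, details the paper asserts without proof.
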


\begin{proof}
Assume the contrary that $\lambda _{0}$ is a nonreal eigenvalue of the $%
q,\omega -$system (1.7)-(1.9). Let $y\left( t,\lambda _{0}\right) $ be a
corresponding (nontrivial) eigenfunction. $\overline{\lambda }_{0}$ is also
an eigenvalue, correponding to the egenfunction $\overline{y}\left( t,%
\overline{\lambda }_{0}\right) .$ Since $\lambda _{0}\neq $ $\overline{%
\lambda }_{0}$ by the previous lemma%
\begin{equation}
\dint\limits_{\omega _{0}}^{a}\left\{ \left\vert y_{1}\left( t,\lambda
_{0}\right) \right\vert ^{2}+\left\vert y_{2}\left( t,\lambda _{0}\right)
\right\vert ^{2}\right\} {\small d}_{q,\omega }{\small t=0.}  \tag{2.36}
\end{equation}%
Hence $y\left( t,\lambda _{0}\right) =0$\ and this is a contradiction.
Consequently, $\lambda _{0}$ must be real.
\end{proof}

\begin{lemma}
The eigenvalues of the $q,\omega -$system (1.7)-(1.9) are simple.
\end{lemma}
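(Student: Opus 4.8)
The plan is to show that any two eigenfunctions associated with one and the same eigenvalue $\lambda_0$ must be linearly dependent, so that the eigenspace is one-dimensional. Suppose $y(t,\lambda_0)$ and $z(t,\lambda_0)$ are both eigenfunctions of the system (1.7)--(1.9) for the eigenvalue $\lambda_0$. In particular each is a solution of (1.7), and each satisfies the left boundary condition (1.8), so that
\begin{equation*}
k_{11}y_1(\omega_0)+k_{12}y_2(\omega_0)=0,\qquad k_{11}z_1(\omega_0)+k_{12}z_2(\omega_0)=0.
\end{equation*}
Since (1.8) is assumed nondegenerate, i.e. $(k_{11},k_{12})\neq(0,0)$, these two relations say that the nonzero vector $(k_{11},k_{12})^{\intercal}$ lies in the kernel of the matrix whose rows are $(y_1(\omega_0),y_2(\omega_0))$ and $(z_1(\omega_0),z_2(\omega_0))$. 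Hence that matrix is singular and
\begin{equation*}
y_1(\omega_0)z_2(\omega_0)-z_1(\omega_0)y_2(\omega_0)=0.
\end{equation*}

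The next step is to recognize the left-hand side as the $q,\omega$-Wronskian evaluated at $\omega_0$. Because $\omega_0=\omega/(1-q)$, a short computation gives $h^{-1}(\omega_0)=(\omega_0-\omega)/q=\omega_0$, so the defining formula (2.1) reads
\begin{equation*}
W_{q,\omega}(y,z)(\omega_0)=y_1(\omega_0)z_2(\omega_0)-z_1(\omega_0)y_2(\omega_0)=0.
\end{equation*}
By Lemma 2.1 the Wronskian of two solutions of (1.7) is independent of $t$, and Corollary 2.2 then forces $W_{q,\omega}(y,z)(t)\equiv0$ on $[\omega_0,a]$.

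Finally I would pass from the vanishing of the Wronskian to linear dependence by invoking the uniqueness part of Theorem 2.3. The relation $W_{q,\omega}(y,z)(\omega_0)=0$ means the initial vectors $(y_1(\omega_0),y_2(\omega_0))$ and $(z_1(\omega_0),z_2(\omega_0))$ are proportional, so there are constants $\alpha,\beta$, not both zero, with $\alpha(y_1(\omega_0),y_2(\omega_0))+\beta(z_1(\omega_0),z_2(\omega_0))=(0,0)$. Then $\alpha y+\beta z$ is a solution of (1.7) with vanishing initial data at $\omega_0$, and by the uniqueness asserted in Theorem 2.3 it must be identically zero. Therefore $y(\cdot,\lambda_0)$ and $z(\cdot,\lambda_0)$ are linearly dependent, and $\lambda_0$ is simple.

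I expect the only genuinely delicate point to be the bookkeeping at the left endpoint, namely verifying $h^{-1}(\omega_0)=\omega_0$, which is exactly what lets the boundary functional $B_1$ in (1.8) and the Wronskian (2.1) be read off at the same argument and matched term by term. Granting that identification, the conclusion is a formal consequence of the uniqueness in Theorem 2.3 together with the constancy dichotomy of Lemma 2.1 and Corollary 2.2.
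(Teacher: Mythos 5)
Your argument is internally sound as far as it goes: the identity $h^{-1}(\omega_0)=\omega_0$ is correct, and the chain (boundary condition at $\omega_0$) $\Rightarrow$ (Wronskian vanishes at $\omega_0$) $\Rightarrow$ (linear dependence via the uniqueness in Theorem 2.3) does show that the eigenspace of any eigenvalue is one-dimensional. But that is \emph{geometric} simplicity, and it is not what this lemma asserts. For a first-order $2\times 2$ system, the separated condition (1.8) pins the initial vector down to the line spanned by $(k_{12},-k_{11})^{\intercal}$, so geometric multiplicity one is essentially automatic — your Wronskian detour is a roundabout way of saying exactly this. The content of the lemma, as the paper's proof makes explicit, is \emph{algebraic} simplicity: every eigenvalue is a simple zero of the characteristic function $\Delta(\lambda)=k_{21}\phi_{1}(a,\lambda)+k_{22}\phi_{2}(h^{-1}(a),\lambda)$ of (2.38), i.e. $\Delta(\lambda_{0})=0$ forces $\frac{d\Delta}{d\lambda}(\lambda_{0})\neq 0$. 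This does not follow from one-dimensionality of the eigenspace: a priori $\lambda_{0}$ could be a double zero of $\Delta$ even though its eigenspace is a line, and it is precisely this possibility that must be excluded.

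The missing idea is a variational identity in the spectral parameter, which no Wronskian-in-$t$ computation can supply. The paper differentiates the system (1.7) with respect to $\lambda$ (giving (2.42)), multiplies the original and differentiated equations by $\frac{\partial y_{1}}{\partial\lambda},\ \frac{\partial y_{2}}{\partial\lambda},\ -y_{1},\ -y_{2}$ respectively, adds, and applies the $q,\omega$-integration to obtain (2.43):
\begin{equation*}
\left.\left\{ y_{2}\left( h^{-1}(t),\lambda\right) \frac{\partial y_{1}(t,\lambda)}{\partial\lambda}-y_{1}(t,\lambda)\frac{\partial y_{2}\left( h^{-1}(t),\lambda\right) }{\partial\lambda}\right\}\right\vert _{\omega_{0}}^{a}=\int\limits_{\omega_{0}}^{a}\left\{ \left( y_{1}(t,\lambda)\right)^{2}+\left( y_{2}(t,\lambda)\right)^{2}\right\} d_{q,\omega}t.
\end{equation*}
If $\lambda_{0}$ were a double eigenvalue, then $\Delta(\lambda_{0})=0$ and $\Delta'(\lambda_{0})=0$ hold simultaneously ((2.39)--(2.40)); since $(k_{21},k_{22})\neq(0,0)$ these give the determinant relation (2.41), which kills the boundary term at $t=a$, while the term at $t=\omega_{0}$ vanishes because the initial data (2.37) are independent of $\lambda$. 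Hence $\int_{\omega_{0}}^{a}\{(\phi_{1}^{0})^{2}+(\phi_{2}^{0})^{2}\}\,d_{q,\omega}t=0$ (the eigenvalue being real by Lemma 2.5, the eigenfunction may be taken real), contradicting its nontriviality. This $\lambda$-derivative computation is the step your proposal needs; as written, it proves a weaker statement than the lemma.
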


\begin{proof}
Let $\phi \left( t,\lambda \right) =\left( 
\begin{array}{c}
\phi _{1}\left( t,\lambda \right) \\ 
\phi _{2}\left( t,\lambda \right)%
\end{array}%
\right) $ be a solution of the $q,\omega -$system (1.7) together with 
\begin{equation}
\phi _{1}\left( \omega _{0},\lambda \right) =k_{12},~~\ \phi _{2}\left(
\omega _{0},\lambda \right) =-k_{11}.  \tag{2.37}
\end{equation}%
It is obvious that $\phi \left( t,\lambda \right) $ satisfies the boundary
condition (1.8). To find the eigenvalues of the $q,\omega -$system
(1.7)-(1.9) \ we have to insert this function into the boundary condition
(1.9) and find the roots of the obtained equation. So, putting the function $%
\phi \left( t,\lambda \right) $ into the boundary condition (1.9) we get the
following characteristic function%
\begin{equation}
\Delta \left( \lambda \right) =k_{21}\phi _{1}\left( a,\lambda \right)
+k_{22}\phi _{2}\left( h^{-1}\left( a\right) ,\lambda \right) .  \tag{2.38}
\end{equation}%
Then $\frac{d\Delta \left( \lambda \right) }{d\lambda }=k_{21}\frac{\partial
\phi _{1}\left( a,\lambda \right) }{\partial \lambda }+k_{22}\frac{\partial
\phi _{2}\left( h^{-1}\left( a\right) ,\lambda \right) }{\partial \lambda }.$
Let $\lambda _{0}$\ be a double eigenvalue, and\ $\phi ^{0}\left( t,\lambda
_{0}\right) $ one of the corresponding eigenfunctions. Then the conditions $%
\Delta \left( \lambda _{0}\right) =0,~\frac{d\Delta \left( \lambda
_{0}\right) }{d\lambda }=0$\ should be fulfilled simultaneously, i.e.,%
\begin{equation}
k_{21}\phi _{1}^{0}\left( a,\lambda _{0}\right) +k_{22}\phi _{2}^{0}\left(
h^{-1}\left( a\right) ,\lambda _{0}\right) =0,  \tag{2.39}
\end{equation}%
\begin{equation}
k_{21}\frac{\partial \phi _{1}^{0}\left( a,\lambda _{0}\right) }{\partial
\lambda }+k_{22}\frac{\partial \phi _{2}^{0}\left( h^{-1}\left( a\right)
,\lambda _{0}\right) }{\partial \lambda }=0.  \tag{2.40}
\end{equation}%
Since $k_{21}~$and $k_{22}$ can not vanish simultaneously, it follows from
that 
\begin{equation}
\phi _{1}^{0}\left( a,\lambda _{0}\right) \frac{\partial \phi _{2}^{0}\left(
h^{-1}\left( a\right) ,\lambda _{0}\right) }{\partial \lambda }-\phi
_{2}^{0}\left( h^{-1}\left( a\right) ,\lambda _{0}\right) \frac{\partial
\phi _{1}^{0}\left( a,\lambda _{0}\right) }{\partial \lambda }=0.  \tag{2.41}
\end{equation}%
Now, differentiating the $q,\omega -$system (1.7) with respect to $\lambda $%
, we obtain%
\begin{equation}
\left\{ 
\begin{array}{l}
-\dfrac{1}{q}D_{\frac{1}{q},\frac{-\omega }{q}}\left( \dfrac{\partial
y_{2}\left( t,\lambda \right) }{\partial \lambda }\right) +\left( p\left(
t\right) -\lambda \right) \dfrac{\partial y_{1}\left( t,\lambda \right) }{%
\partial \lambda }=y_{1}\left( t,\lambda \right) , \\ 
D_{q,\omega }\left( \dfrac{\partial y_{1}\left( t,\lambda \right) }{\partial
\lambda }\right) +\left( r\left( t\right) -\lambda \right) \dfrac{\partial
y_{2}\left( t,\lambda \right) }{\partial \lambda }=y_{2}\left( t,\lambda
\right) .%
\end{array}%
\right.  \tag{2.42}
\end{equation}

Multiplying the $q,\omega -$system (1.7) and (2.42) by $\frac{\partial
y_{1}\left( t,\lambda \right) }{\partial \lambda },~\frac{\partial
y_{2}\left( t,\lambda \right) }{\partial \lambda },~-y_{1}\left( t,\lambda
\right) ~$and $-y_{2}\left( t,\lambda \right) ,$ respectively, adding them
together and applying the $q,\omega -$ integration, we obtain%
\begin{equation}
\begin{array}{l}
\left. \left\{ y_{2}\left( h^{-1}\left( t\right) ,\lambda \right) \dfrac{%
\partial y_{1}\left( t,\lambda \right) }{\partial \lambda }-y_{1}\left(
t,\lambda \right) \dfrac{\partial y_{2}\left( h^{-1}\left( t\right) ,\lambda
\right) }{\partial \lambda }\right\} \right\vert _{\omega _{0}}^{a} \\ 
=\dint\limits_{\omega _{0}}^{a}\left\{ \left( y_{1}\left( t,\lambda \right)
\right) ^{2}+\left( y_{2}\left( t,\lambda \right) \right) ^{2}\right\} 
{\small d}_{q,\omega }{\small t.}%
\end{array}
\tag{2.43}
\end{equation}%
Putting $\lambda =\lambda _{0}$, taking account that $\left. \dfrac{\partial
\phi _{1}^{0}\left( t,\lambda _{0}\right) }{\partial \lambda }\right\vert
_{t=\omega _{0}}=\left. \dfrac{\partial \phi _{2}^{0}\left( t,\lambda
_{0}\right) }{\partial \lambda }\right\vert _{t=\omega _{0}}=0$, and using
the equality (2.41), we obtain%
\begin{equation*}
\dint\limits_{\omega _{0}}^{a}\left\{ \left( \phi _{1}^{0}\left( t,\lambda
_{0}\right) \right) ^{2}+\left( \phi _{2}^{0}\left( t,\lambda _{0}\right)
\right) ^{2}\right\} {\small d}_{q,\omega }{\small t=0.}
\end{equation*}%
Hence $\phi _{1}^{0}\left( t,\lambda _{0}\right) =\phi _{2}^{0}\left(
t,\lambda _{0}\right) =0,$ which is impossible. Consequently $\lambda _{0}$
must be a simple eigenvalue.
\end{proof}

\section{Examples}

\begin{remark}
(see $\left[ 16,17\right] $) The $q,\omega -$ sine and cosine functions
defined by (1.18) and (1.19) have real and simple zeros $\left\{ \pm
g_{n}\right\} _{n=1}^{\infty },~\left\{ \pm j_{n}\right\} _{n=1}^{\infty },$
respectively,%
\begin{equation*}
\begin{array}{l}
g_{n}=\omega _{0}+q^{-n}\left( 1-q\right) ^{-1}\left( 1+O\left( q^{n}\right)
\right) , \\ 
j_{n}=\omega _{0}+q^{-n+1/2}\left( 1-q\right) ^{-1}\left( 1+O\left(
q^{n}\right) \right) ,%
\end{array}%
~n\geq 1.
\end{equation*}
\end{remark}

\begin{example}
Consider the $q,\omega -$ Dirac system (1.7)-(1.9) in which $p\left(
t\right) =r\left( t\right) =0:$ 
\begin{equation}
\left\{ 
\begin{array}{l}
-\dfrac{1}{q}D_{\frac{1}{q},\frac{-\omega }{q}}y_{2}=\lambda y_{1}, \\ 
D_{q,\omega }y_{1}=\lambda y_{2},%
\end{array}%
\right.   \tag{3.1}
\end{equation}%
\begin{equation}
y_{1}\left( \omega _{0}\right) =0,  \tag{3.2}
\end{equation}%
\begin{equation}
y_{2}\left( h^{-1}\left( \pi \right) \right) =0.  \tag{3.3}
\end{equation}%
It is easy to see that a solution (3.1) and (3.2) is given by%
\begin{equation*}
\phi ^{\intercal }\left( t,\lambda \right) =\left( -S_{q,\omega }\left(
t,\lambda \right) ,~-C_{q,\omega }\left( t,\sqrt{q}\lambda \right) \right) .
\end{equation*}%
By substituting this solution in (3.3), we obtain $\Delta \left( \lambda
\right) =C_{q,\omega }\left( h^{-1}\left( \pi \right) ,\sqrt{q}\lambda
\right) .$ By the previous Remark 3.1, the eigenvalues are 
\begin{equation}
\lambda _{n}=\dfrac{q^{-n+1}}{\left( 1-q\right) \left( \pi -\omega
_{0}\right) }\left( 1+O\left( q^{n}\right) \right) ,~n=1,2,...~.  \tag{3.4}
\end{equation}%
This approximate the eigenvalues of the $q-$Dirac system as $\omega
\rightarrow 0^{+},$ see Example 1 in $\left[ 20\right] .$
\end{example}

\begin{example}
Consider the $q,\omega -$ Dirac system (3.1) together with the following
boundary conditions 
\begin{equation}
y_{2}\left( \omega _{0}\right) =0,  \tag{3.5}
\end{equation}%
\begin{equation}
y_{2}\left( h^{-1}\left( \pi \right) \right) =0.  \tag{3.6}
\end{equation}%
In this case $\phi ^{\intercal }\left( t,\lambda \right) =\left( C_{q,\omega
}\left( t,\lambda \right) ,~-\sqrt{q}S_{q,\omega }\left( t,\sqrt{q}\lambda
\right) \right) .$\ Since $\Delta \left( \lambda \right) =\sqrt{q}%
S_{q,\omega }\left( h^{-1}\left( \pi \right) ,\sqrt{q}\lambda \right) ,~$%
then the eigenvalues are given by 
\begin{equation}
\lambda _{n}=\dfrac{q^{-n+1/2}}{\left( 1-q\right) \left( \pi -\omega
_{0}\right) }\left( 1+O\left( q^{n}\right) \right) ,~n=1,2,...~.  \tag{3.7}
\end{equation}
\end{example}


\begin{thebibliography}{99}
\bibitem{1} W. Hahn, \"{U}ber orthogonalpolynome, die $q$%
-differenzenlgleichungen gen\"{u}gen, Math. Nachr. \textbf{2} (1949), 4-34.

\bibitem{2} W. Hahn, Ein Beitrag zur Theorie der Orthogonalpolynome,
Monatsh. Math., \textbf{95} (1983), 19-24.

\bibitem{3} G.D. Birkhoff, General theory of linear difference equations,
Trans. Amer. J. Math. Soc., \textbf{12} (1911), 243-284.

\bibitem{4} M.T. Bird, On generalizations of sum formulas of the
Euler-Maclaurin type, Amer. J. Math., \textbf{58 }(1936), 487-503.

\bibitem{5} C. Jordan, Calculus of Finite Differences, Chelsea, New York,
1965.

\bibitem{6} D.L. Jagerman, Difference Equations with Applications to Queues,
Marcel Dekker, New York, 2000.

\bibitem{7} G.E. Andrews, R. Askey, R. Roy, Special Functions\textit{,}
Cambridge Univ. Press, Cambridge, 1999.

\bibitem{8} G. Gasper, M. Rahman, Basic Hypergeometric Series,\textit{\ }%
Cambridge Univ. Press, New York, 1990.

\bibitem{9} M.E.H. Ismail, Classical and Quantum Orthogonal Polynomials in
One Variable, Encyclopedia Math. Appl., vol.98, Cambridge Press, Cambridge,
2005.

\bibitem{10} M.H. Abu-Risha, M.H. Annaby, M.E.H Ismail, Z.S. Mansour, Linear 
$q$-difference equations, Z.Anal. Anwend, \textbf{26} (2007), 481-494.

\bibitem{11} F.H. Jackson,\ On $q$-functions and a certain difference
operator, Trans. Roy. Soc. Edin., \textbf{46 }(1908), 253-281.

\bibitem{12} F.H. Jackson,\textit{\ }On $q$-definite integrals\textit{,} Q.
J. Pure Appl. Math., \textbf{41} (1910), 193-203.

\bibitem{13} M.H. Annaby, A.E. Hamza, K.A. Aldwoah, Hahn difference operator
and associated Jackson-N\"{o}rlund integrals, J. Optim. Theory Appl., 
\textbf{154} (2012), 133-153.

\bibitem{14} A.E. Hamza, S.M. Ahmed, Existence and uniqueness of solutions
of Hahn difference equations, Adv. Difference Equ.\textbf{\ 2013}:316,
(2013).

\bibitem{15} A.E. Hamza, S.M. Ahmed, Theory of linear Hahn difference
equations, Journal of Advances in Mathematics, \textbf{4}:2 (2013) 441-461.

\bibitem{16} M.H. Annaby, A.E. Hamza, S.D. Makharesh, A Sturm-Liouville
Theory for Hahn Difference Operator, In:Frontiers of Orthogonal Polynomials
and $q$-Series, Xin Li and Zuhair Nashed, (eds), World Scientific,
Singapore, 2018, pp.35-84.

\bibitem{17} M.A. Annaby, H.A. Hassan, Sampling theorems for Jackson-N\"{o}%
rlund transforms associated with Hahn-difference operators, J. Math. Anal.
Appl. \textbf{464}:1(2018), 493-506.

\bibitem{18} B.P. Allahverdiev, H. Tuna, One-dimensional q-Dirac equation%
\textit{,} Math. Meth. Appl. Sci., \textbf{40} (2017), 7287--7306.

\bibitem{19} F. H\i ra, Eigenvalues and eigenfunctions of q-Dirac system%
\textit{,} 2018, preprint, arXiv:1804.06165 [math.CA].

\bibitem{20} F. H\i ra, Sampling theorem associated with q-Dirac system,
2018, preprint, arXiv:1804.06224 [math.CA].
\end{thebibliography}
\end{document}